\newtheorem{theorem}{Theorem}[section]
\newtheorem{lemma}[theorem]{Lemma}
\newtheorem{proposition}[theorem]{Proposition}
\newtheorem{corollary}[theorem]{Corollary}
\theoremstyle{definition}
\numberwithin{equation}{section}
\DeclareMathOperator{\diam}{diam}
\DeclareMathOperator{\co}{co}
\begin{document}
\title[On the Karlsson-Nussbaum conjecture]{On the Karlsson-Nussbaum
conjecture for resolvents of nonexpansive mappings}
\author[A. Huczek]{Aleksandra Huczek}
\address{Department of Mathematics, Pedagogical University of Krakow,
PL-30-084 Cracow, Poland}
\email{aleksandra.huczek@up.krakow.pl}
\author[A. Wi\'{s}nicki]{Andrzej Wi\'{s}nicki}
\address{Department of Mathematics, Pedagogical University of Krakow,
PL-30-084 Cracow, Poland}
\email{andrzej.wisnicki@up.krakow.pl}
\date{}

\begin{abstract}
Let $D\subset \mathbb{R}^{n}$ be a bounded convex domain and $F:D\rightarrow
D$ a $1$-Lipschitz mapping with respect to the Hilbert metric $d$ on $D$
satisfying condition $d(sx+(1-s)y,sz+(1-s)w)\leq \max \{d(x,z),d(y,w) \}$. We
show that if $F$ does not have fixed points, then the convex hull of the
accumulation points (in the norm topology) of the family $\{R_{\lambda
}\}_{\lambda >0}$ of resolvents of $F$ is a subset of $\partial D.$ As a
consequence, we show a Wolff-Denjoy type theorem for resolvents of
nonexpansive mappings acting on an ellipsoid $D$.
\end{abstract}

\subjclass[2020]{Primary 53C60; Secondary 37C25, 47H09, 51M10}
\keywords{Karlsson--Nussbaum conjecture, Wolff--Denjoy theorem, Geodesic
space, Hilbert's projective metric, Resolvent, Nonexpansive mapping.}
\maketitle

\section{Introduction}

The study of dynamics of nonlinear mappings started by considering iterates
of holomorphic mappings on one-dimensional bounded domains. In this field,
one of the first theorem is the classical Wolff--Denjoy theorem which
describes dynamics of iteration of holomorphic self-mappings on the unit
disc of the complex plane. It asserts that if $f:\Delta \rightarrow \Delta $
is a holomorphic map of the unit disc $\Delta \subset \mathbb{C}$ without a
fixed point, then there is a point $\xi \in \partial \Delta $ such that the
iterates $f^{n}$ converge locally uniformly to $\xi $ on $\Delta $.
Generalizations of this theorem in different directions have been obtained
by numerous authors (see \cite{Ab,Bu1,Ka1,Nu,LLNW} and references therein).
One such generalization was formulated by Beardon who noticed that the
Wolff--Denjoy theorem can be considered in a purely geometric way depending
only on the hyperbolic properties of a metric and gave its proof using
geometric methods (see \cite{Be1}). In \cite{Be2}, Beardon extended his
approach for strictly convex bounded domains with the Hilbert metric.
Considering the notion of the omega limit set $\omega _{f}(x)$ as the set of
accumulation points of the sequence $f^{n}(x)$ and the notion of the
attractor $\Omega _{f}=\bigcup_{x\in D}\omega _{f}(x)$, we can formulate a
generalization of the Wolff-Denjoy theorem known as the Karlsson-Nussbaum
conjecture, which was formulated independently by Karlsson and Nussbaum (see 
\cite{Ka3, Nu}). This conjecture states that if $D$ is a bounded convex
domain in a finite-dimensional real vector space and $f:D\rightarrow D$ is a
fixed point free nonexpansive mapping acting on the Hilbert metric space $(D,d_{H})$,
then there exists a convex set $\Omega \subseteq \partial D$ such that for
each $x\in D$, all accumulation points $\omega _{f}(x)$ of the orbit $O(x,f)$
lie in $\Omega $.

The aim of this note is to show a variant of the Karlsson-Nussbaum
conjecture for resolvents of nonexpansive ($1$-Lipschitz) mappings. For this
purpose we construct in Section 3 the family of resolvents of a nonexpansive
mapping and prove its main properties: nonexpansivity and the resolvent
identity.
In the literature, the resolvents usually occur in the context of Banach spaces or geodesic spaces that are Busemann convex, see e.g., \cite{ALL, Si}. Then their construction is based on the Banach contraction principle. Since a Hilbert metric space $(D,d_{H})$ is in general not Busemann convex, our construction of resolvents is a little more complicated and exploits the argument related to Edelstein's theorem \cite{Ed}.       

In Section 4 we formulate and prove the main theorem of this work.
We show that if $D\subset \mathbb{R}^{n}$ is a bounded convex domain and $%
F:D\rightarrow D$ is a fixed point free nonexpansive mapping with respect to
the Hilbert metric $d_{H}$ on $D$ satisfying condition 
\begin{equation}
d_{H}(sx+(1-s)y,sz+(1-s)w)\leq \max \{d_{H}(x,z),d_{H}(y,w)\},  \tag{D}
\label{D}
\end{equation}%
then the convex hull of the accumulation points of the family $\{R_{\lambda
}\}_{\lambda >0}$ of resolvents of $F$ is a subset of $\partial D$. Since a
Hilbert metric space $(D,d_{H})$ is Busemann convex if and only if $D$ is
ellipsoid, we obtain as a corollary a Wolff-Denjoy type theorem for
resolvents of nonexpansive mappings acting on an ellipsoid $D$.

\section{Preliminaries}

Let $V$ be a finite dimensional real vector space, $D\subset V$ a convex
bounded domain and $(D,d)$ a metric space. A curve $\sigma :[a,b]\rightarrow
D$ is said to be \textit{geodesic }if $d(\sigma (t_{1}),\sigma
(t_{2}))=\left\vert t_{1}-t_{2}\right\vert $ for all $t_{1},t_{2}\in \lbrack
a,b]$. We will use the same name for the image $\sigma ([a,b])\subset D$ of $%
\sigma $, denoted by $[\sigma (a),\sigma (b)].$ We say that $D$ is a \textit{%
geodesic space} if every two points of $D$ can be joined by a geodesic. A
map $F:D\rightarrow D$ is called \textit{contractive} if $%
d(F(x),F(y))<d(x,y) $ for any distinct points $x,y\in D$. A map $%
F:D\rightarrow D$ is called \textit{nonexpansive} if for any $x,y\in D,$ $%
d(F(x),F(y))\leq d(x,y)$.

We recall the definition of \textit{the Hilbert metric space}. If $x,y\in D$%
, consider the straight line passing through $x$ and $y$ that intersects the
boundary of $D$ in precisely two points $a$ and $b$. Assuming that $x$ is
between $a$ and $y$, and $y$ is between $x$ and $b$, we define the
cross-ratio metric 
\begin{equation*}
d_{H}(x,y)=\log \bigg(\frac{||y-a||\,||x-b||}{||x-a||\,||y-b||}\bigg),\qquad
x\neq y.
\end{equation*}%
Furthermore, we put $d_{H}(x,y)=0$ if $x=y$.

Following Beardon \cite{Be2} we consider the subsequent lemmas.

\begin{lemma}
\label{hilbert1}Let $D_{1},D_{2}\subset V,\,D_{1}\subset D_{2}$ be bounded
convex domains and $(D_{1},d_{1}),(D_{2},d_{2})$ be Hilbert metric spaces,
then $d_{2}\leq d_{1}$. Furthermore, for distinct points $x,y\in
D_{1},d_{1}(x,y)=d_{2}(x,y)$ iff the segment $L_{xy}\cap D_{1}$ coincides
with $L_{xy}\cap D_{2}$.
\end{lemma}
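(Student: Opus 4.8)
The plan is to reduce the whole comparison to a one-dimensional computation along the line $L_{xy}$. First I would fix distinct points $x,y\in D_1$ and parametrize $L_{xy}$ by arc length so that $x$ and $y$ correspond to the real numbers $0$ and $\ell=\Vert x-y\Vert>0$. Since $D_1$ and $D_2$ are bounded convex domains, the intersections $L_{xy}\cap D_1$ and $L_{xy}\cap D_2$ are open segments whose closures are $[a_1,b_1]$ and $[a_2,b_2]$, where $a_i,b_i$ are the boundary points entering the definition of $d_i$, with ordering $a_i,x,y,b_i$ along the line.

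Next I would record the Hilbert distance in this parametrization. Writing $p_i=\Vert x-a_i\Vert$ and $q_i=\Vert y-b_i\Vert$, so that $a_i$ sits at $-p_i$ and $b_i$ at $\ell+q_i$, direct substitution into the cross-ratio gives
$$d_i(x,y)=\log\!\left(\left(1+\frac{\ell}{p_i}\right)\left(1+\frac{\ell}{q_i}\right)\right),\qquad i=1,2.$$
For fixed $\ell>0$ the function $(p,q)\mapsto\log\!\left((1+\ell/p)(1+\ell/q)\right)$ is strictly decreasing in each of the variables $p>0$ and $q>0$; this elementary monotonicity is the engine of the whole argument.

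The inclusion $D_1\subset D_2$ then does the geometric work. Because $D_1\subset D_2$ and both chords contain the segment $[x,y]$, we have $(a_1,b_1)\subset(a_2,b_2)$ as subsets of the line, hence $p_2\ge p_1$ and $q_2\ge q_1$. Feeding these inequalities into the displayed formula and using the monotonicity yields $d_2(x,y)\le d_1(x,y)$, which is the first assertion.

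For the equality statement I would again invoke the \emph{strict} monotonicity: the equality $d_1(x,y)=d_2(x,y)$ forces $p_1=p_2$ and $q_1=q_2$, that is $a_1=a_2$ and $b_1=b_2$, which is precisely the condition that the closed chords, and hence the open segments $L_{xy}\cap D_1$ and $L_{xy}\cap D_2$, coincide; the converse is immediate, since equal chords give equal cross-ratios. I do not anticipate a genuine obstacle here. The only point demanding care is the bookkeeping of orientation and the ordering of the six points $a_2,a_1,x,y,b_1,b_2$ on $L_{xy}$, so that the inclusion of domains translates correctly into $p_2\ge p_1$ and $q_2\ge q_1$ rather than the reverse.
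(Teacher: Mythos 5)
Your proof is correct: the parametrization gives $d_i(x,y)=\log\bigl((1+\ell/p_i)(1+\ell/q_i)\bigr)$, and the strict monotonicity in $p$ and $q$ together with the chord inclusion $(a_1,b_1)\subset(a_2,b_2)$ yields both the inequality $d_2\leq d_1$ and the equality case. The paper itself states this lemma without proof, citing Beardon, and your argument is precisely the standard cross-ratio monotonicity computation underlying that reference, so there is nothing to flag beyond the ordering bookkeeping you already identified.
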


\begin{lemma}
\label{Bcontraction}Suppose that $(D,d_{H})$ is a Hilbert metric space, $%
x_{0}\in D$ and $l\in \lbrack 0,1)$. Then the mapping $g(x)=x_{0}+l(x-x_{0})$
is contractive.
\end{lemma}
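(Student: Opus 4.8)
The plan is to exhibit $g$ as an isometry between two nested Hilbert metric spaces and then invoke Lemma~\ref{hilbert1}. If $l=0$ the map $g\equiv x_0$ is constant, so $d_H(g(x),g(y))=0<d_H(x,y)$ for distinct $x,y$ and there is nothing to prove; hence I assume $l\in(0,1)$. Writing $g(x)=(1-l)x_0+lx$, the map $g$ is the homothety with center $x_0$ and ratio $l$, and it carries the bounded convex domain $D$ onto the bounded convex domain $D':=g(D)=x_0+l(D-x_0)$. Since a homothety multiplies every Euclidean distance by the factor $l$, the cross ratio defining $d_H(x,y)$ on $D$ equals the cross ratio defining the Hilbert metric $d_{D'}(g(x),g(y))$ on $D'$ (the factors $l$ cancel), because $g$ sends the two boundary points $a,b\in\partial D$ cut out by the chord $L_{xy}$ to the two boundary points of $D'$ cut out by $L_{g(x)g(y)}$. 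Thus $g$ is a bijective isometry of $(D,d_H)$ onto $(D',d_{D'})$, and in particular $d_{D'}(g(x),g(y))=d_H(x,y)$ for all $x,y\in D$.

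Next I would record that $D'$ is compactly contained in $D$. For $q\in\cl D$ the point $x_0+l(q-x_0)=(1-l)x_0+lq$ is a convex combination of the interior point $x_0\in D$ and $q\in\cl D$ with weight $1-l>0$ on $x_0$, hence lies in $\inter D=D$; therefore $\cl D'\subset D$ and, in particular, every point of $\partial D'$ is an interior point of $D$.

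Now I would apply Lemma~\ref{hilbert1} with $D_1=D'\subset D_2=D$. Its first part yields $d_H\le d_{D'}$ on $D'$, so for distinct $x,y\in D$
\[
d_H(g(x),g(y))\le d_{D'}(g(x),g(y))=d_H(x,y).
\]
To obtain strict inequality I would use the equality clause of Lemma~\ref{hilbert1}: equality would force the chord $L_{g(x)g(y)}\cap D'$ to coincide with $L_{g(x)g(y)}\cap D$. However the endpoints of the first chord lie on $\partial D'\subset\inter D$, so they are interior points of $D$ and cannot coincide with the endpoints of the second chord, which lie on $\partial D$. Hence the two chords are distinct, equality is impossible, and $d_H(g(x),g(y))<d_H(x,y)$ whenever $x\ne y$; that is, $g$ is contractive.

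The step I expect to carry the real content is the passage from nonexpansivity to strict contractivity, which hinges on the compact containment $\cl D'\subset D$: this is exactly what guarantees that $\partial D'$ stays away from $\partial D$ and rules out the equality case in Lemma~\ref{hilbert1}. The isometry claim itself is routine for a homothety, the only care being the bookkeeping that $g$ matches the chord's boundary points in $D$ with those in $D'$.
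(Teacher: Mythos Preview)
Your proof is correct and follows essentially the same approach as the paper: both exhibit $g$ as an isometry from $(D,d_H)$ onto $(g(D),d_{g(D)})$ via the homothety's preservation of cross ratios, then use the compact containment $\cl g(D)\subset D$ together with Lemma~\ref{hilbert1} to conclude strict inequality. Your version is slightly more explicit in treating $l=0$ separately and in invoking the equality clause of Lemma~\ref{hilbert1}, but the argument is the same.
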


\begin{proof}
Fix $x_{0}\in D$ and $l\in \lbrack 0,1)$. Let $x,y\in D$ and consider the
straight line passing through $x$ and $y$ that intersects $\partial D$ in
two points $x^{\prime }$ and $y^{\prime }$ such that $x$ is between $%
x^{\prime }$ and $y$, and $y$ is between $x$ and $y^{\prime }$. Take two
points $z^{\prime }=(1-l)x_{0}+lx^{\prime }\in \partial g(D),w^{\prime
}=(1-l)x_{0}+ly^{\prime }\in \partial g(D)$, and note that the points $%
z^{\prime },g(x),g(y),w^{\prime }$ are collinear such that $g(x)$ is between 
$z^{\prime }$ and $g(y)$, and $g(y)$ is between $g(x)$ and $w^{\prime }$.
Since $g(D)$ lies in a compact subset of $D$, it follows from Lemma \ref%
{hilbert1} that $d_{H}(g(x),g(y))<d_{2}(g(x),g(y))$, where $d_{2}$ denotes
the Hilbert metric in $g(D)$. By definition of the Hilbert metric space we
have 
\begin{equation*}
d_{H}(x,y)=\log \bigg(\frac{||x^{\prime }-y||\,||x-y^{\prime }||}{%
||x^{\prime }-x||\,||y-y^{\prime }||}\bigg)=\log \bigg(\frac{||z^{\prime
}-g(y)||\,||w^{\prime }-g(x)||}{||z^{\prime }-g(x)||\,||w^{\prime }-g(y)||}%
\bigg)=d_{2}(g(x),g(y)).
\end{equation*}%
Therefore we get $d_{H}(g(x),g(y))<d_{H}(x,y)$.
\end{proof}

Note that if $D\subset V$ is a bounded convex domain, then the Hilbert
metric $d_{H}$ is locally equivalent to the euclidean norm in $V$.
Furthermore, for any $w\in D$, if $\{x_{n}\}$ is a sequence in $D$
converging to $\xi \in \partial D=\overline{D}\setminus D$, then 
\begin{equation*}
d_{H}(x_{n},w)\rightarrow \infty
\end{equation*}%
(see \cite{Be2, Ka1}). The above property is equivalent to properness of $D$%
, that is, every closed and bounded subset of $(D,d_{H})$ is compact. It is
not difficult to show that for $x,y,z\in D$ and $s\in \lbrack 0,1],$ 
\begin{equation}
d_{H}(sx+(1-s)y,z)\leq \max \{d_{H}(x,z),d_{H}(y,z)\}.  \tag{C}  \label{C}
\end{equation}%
In what follows, we will assume a more restrictive condition: for all $%
x,y,z,w\in D$ and $s\in \lbrack 0,1],$ 
\begin{equation}
d_{H}(sx+(1-s)y,sz+(1-s)w)\leq \max \{d_{H}(x,z),d_{H}(y,w)\}.  \tag{D}
\end{equation}

\section{Resolvents of nonexpansive mappings}

In this section we describe the construction of a resolvent of a
nonexpansive mapping acting on a Hilbert metric space. Let $D\subset V$ be a
convex bounded domain and $F:D\rightarrow D$ a nonexpansive mapping with
respect to the Hilbert metric $d$ on $D$. Recall that the topology of $(D,d)$ coincides with the Euclidean topology and $(D,d)$ is proper metric space,
that is, every closed ball $\bar{B}(x_0,r), \, x_0 \in D,$ is compact. We fix $x\in D$, $\lambda >0$, and
define a mapping 
\begin{equation*}
G_{x,\lambda }(y)=\frac{1}{1+\lambda }x+\frac{\lambda }{1+\lambda }%
F(y),\qquad y\in D.
\end{equation*}%
It follows from Lemma \ref{Bcontraction} that $G_{x,\lambda }$ is
contractive.

We show that $G_{x,\lambda }(D)$ is bounded. For this purpose, we
select $w\in G_{x,\lambda }(D)$. Note that there exists $y\in D$ such that $%
w=\frac{1}{1+\lambda }x+\frac{\lambda }{1+\lambda }F(y)$. We show that $B(w,%
\frac{1}{1+\lambda }d)\subset D$, where $d=\inf_{v\in \partial D}||v-x||$.
Choose any $w^{\prime }\in B(w,\frac{1}{1+\lambda }d)$. Then there exists $%
z\in D$ such that $w^{\prime }=\frac{1}{1+\lambda }z+\frac{\lambda }{%
1+\lambda }F(y)$. Note that 
\begin{equation*}
||w-w^{\prime }||=\bigg|\bigg|\frac{1}{1+\lambda }x+\frac{\lambda }{%
1+\lambda }F(y)-\frac{1}{1+\lambda }z+\frac{\lambda }{1+\lambda }F(y)\bigg|%
\bigg|=\frac{1}{1+\lambda }||x-z||.
\end{equation*}%
If $||w-w^{\prime }||<\frac{1}{1+\lambda }$, then $||x-z||=(1+\lambda
)||w-w^{\prime }||<d$. It implies that $z\in D$ and hence $w^{\prime }\in D$%
. It follows that for all $w\in G_{x,\lambda }(D),$ 
\begin{equation}\label{ineqinf} 
\inf_{v \in \partial D}||v-w||\geq \frac{1}{1+\lambda }d.
\end{equation}%
Take a sequence $\{w_n \} \subset G_{x, \lambda}(D)$. Since $\overline{D}$ is compact in the Euclidean topology, there exists a subsequence $\{w_{n_k} \}$ and $x_0 \in \overline{D}$ such that $||w_{n_k}-x_0|| \rightarrow 0$, if $k \rightarrow \infty$. It follows from (\ref{ineqinf}) that $x_0 \in D$, and hence $d(w_{n_k},x_0) \rightarrow 0$ since the topology of $(D,d)$ coincides with the Euclidean topology. Therefore, $G_{x, \lambda}(D)$ is bounded in $(D,d)$ and by properness of $D$ we have that $\overline{G_{x,\lambda }(D)}$ is compact in $(D,d)$.

Note that $D\supset G_{x,\lambda }(D)\supset G_{x,\lambda }^{2}(D)\supset
... $, which means that the orbits of $G_{x,\lambda }$ are bounded. Fix $%
y\in D$. Since $\overline{G_{x,\lambda }(D)}$ is compact, there exists a
subsequence $\{G_{x,\lambda }^{n_{k}}(y)\}$ of $\{G_{x,\lambda }^{n}(y)\}$
converging to some $z\in D$. Let 
\begin{equation*}
d_{n}=d(G_{x,\lambda }^{n}(y),G_{x,\lambda }^{n+1}(y)).
\end{equation*}%
Since $G_{x,\lambda }$ is contractive, the sequence $\{d_{n}\}$ is
decreasing and hence it converges to some $\zeta $, as $n\rightarrow \infty $%
. Hence%
\begin{equation*}
\zeta \leftarrow d_{n_{k}}=d(G_{x,\lambda }^{n_{k}}(y),G_{x,\lambda
}^{n_{k}+1}(y))\rightarrow d(G_{x,\lambda }(z),z),
\end{equation*}%
and 
\begin{equation*}
\zeta \leftarrow d_{n_{k}+1}=d(G_{x,\lambda }^{n_{k}+1}(y),G_{x,\lambda
}^{n_{k}+2}(y))\rightarrow d(G_{x,\lambda }^{2}(z),G_{x,\lambda }(z)).
\end{equation*}%
We get%
\begin{equation*}
d(G_{x,\lambda }^{2}(z),G_{x,\lambda }(z))=d(G_{x,\lambda }(z),z)=\zeta .
\end{equation*}%
Since the map $G_{x,\lambda }$ is contractive, $G_{x,\lambda }(z)=z.$
Moreover, $z$ is the unique fixed point of $G_{x,\lambda }$. Indeed, otherwise if $z_1, z_2 \in D$, $z_1 \neq z_2$ are fixed points of $G_{x, \lambda}$, then 

\begin{equation*}
d(z_{1},z_{2})=d(G_{x,\lambda }(z_{1}),G_{x,\lambda }(z_{2}))<d(z_{1},z_{2}),
\end{equation*}%
and we obtain a contradiction. Define $z=R_{\lambda }(x).$ We refer to the
mapping $R_{\lambda }:D\rightarrow D$ as \textit{the resolvent of }$F.$ We
have%
\begin{equation*}
z=G_{x,\lambda }(z)=\frac{1}{1+\lambda }x+\frac{\lambda }{1+\lambda }%
F(z),\qquad x\in D,\;\lambda >0,
\end{equation*}%
and hence%
\begin{equation}
R_{\lambda }(x)=\frac{1}{1+\lambda }x+\frac{\lambda }{1+\lambda }%
F(R_{\lambda }(x)),\qquad x\in D,\,\lambda >0.  \label{res}
\end{equation}%
Furthermore, any converging subsequence $G_{x,\lambda }^{m_{k}}(y)$ has the
limit $z$ (the unique fixed point), as $k\rightarrow \infty .$ This gives
the formula: 
\begin{equation}
\lim_{n\rightarrow \infty }G_{x,\lambda }^{n}(y)=R_{\lambda }(x),\qquad y\in
D.  \label{res2}
\end{equation}

It turns out that if $(D,d)$ is sufficiently regular, then the resolvent of
a nonexpansive mapping is also nonexpansive.

\begin{lemma}
\label{nonex}Let $(D,d)$ be a Hilbert metric space satisfying condition (\ref%
{D}), $F:D\rightarrow D$ a nonexpansive mapping, and $\lambda >0$. Then the
resolvent $R_{\lambda }:D\rightarrow D$ is nonexpansive.
\end{lemma}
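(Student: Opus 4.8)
The plan is to avoid substituting the fixed-point equations for $R_{\lambda}(x)$ and $R_{\lambda}(y)$ directly into condition (\ref{D}). Writing $p=R_{\lambda}(x)$, $q=R_{\lambda}(y)$ and $s=\frac{1}{1+\lambda}$, such a substitution combined with the nonexpansivity of $F$ yields only
\[
d(p,q)\le \max\{d(x,y),\,d(F(p),F(q))\}\le \max\{d(x,y),\,d(p,q)\},
\]
which is vacuous as soon as $d(p,q)\ge d(x,y)$. Instead I would exploit the dynamical characterization (\ref{res2}) of the resolvent as the limit of the iterates of $G_{x,\lambda}$.

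Fix $x,y\in D$ and a common base point $u\in D$, and set $a_{n}=G_{x,\lambda}^{n}(u)$ and $b_{n}=G_{y,\lambda}^{n}(u)$, so that $a_{0}=b_{0}=u$. By definition $a_{n+1}=sx+(1-s)F(a_{n})$ and $b_{n+1}=sy+(1-s)F(b_{n})$. Applying condition (\ref{D}) to these two convex combinations (with roles $x,F(a_{n})$ and $y,F(b_{n})$) gives
\[
d(a_{n+1},b_{n+1})\le \max\{d(x,y),\,d(F(a_{n}),F(b_{n}))\},
\]
and since $F$ is nonexpansive, $d(F(a_{n}),F(b_{n}))\le d(a_{n},b_{n})$, whence
\[
d(a_{n+1},b_{n+1})\le \max\{d(x,y),\,d(a_{n},b_{n})\}.
\]

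Now I would run an induction on $n$. The base case is $d(a_{0},b_{0})=d(u,u)=0\le d(x,y)$. If $d(a_{n},b_{n})\le d(x,y)$, the recursion above gives $d(a_{n+1},b_{n+1})\le \max\{d(x,y),d(x,y)\}=d(x,y)$. Hence $d(a_{n},b_{n})\le d(x,y)$ for every $n$. Finally, by (\ref{res2}) we have $a_{n}\to R_{\lambda}(x)$ and $b_{n}\to R_{\lambda}(y)$ in $(D,d)$; since $d$ is continuous, passing to the limit yields $d(R_{\lambda}(x),R_{\lambda}(y))\le d(x,y)$, as required.

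The only real subtlety is the one flagged at the outset: condition (\ref{D}) must be applied \emph{along the orbits} rather than to the fixed points themselves, so that the induction is anchored at the trivial base value $d(a_{0},b_{0})=0$ coming from a shared starting point. Given that, each step is forced by (\ref{D}) together with the nonexpansivity of $F$, and the passage to the limit is justified because convergence in $(D,d)$ agrees with Euclidean convergence and $d$ is jointly continuous.
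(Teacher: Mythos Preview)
Your proof is correct and follows essentially the same route as the paper: iterate $G_{x,\lambda}$ and $G_{y,\lambda}$ from a common starting point, use condition~(\ref{D}) together with the nonexpansivity of $F$ to propagate the bound $d(a_n,b_n)\le d(x,y)$ by induction, and then pass to the limit via~(\ref{res2}). The only cosmetic difference is that you anchor the induction at $n=0$ with the trivial value $d(u,u)=0$, whereas the paper starts at $n=1$; your variant is arguably cleaner since it avoids a separate appeal to the three-point condition~(\ref{C}) for the base case.
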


\begin{proof}
Fix $z_{0},z_{1},z_{2}\in D$. First we show that $d(G_{z_{1},\lambda
}^{n}(z_{0}),G_{z_{2},\lambda }^{n}(z_{0}))\leq d(z_{1},z_{2})$ for each $n$%
. We proceed by induction. For $n=1$, it follows from condition (\ref{C})
that 
\begin{eqnarray*}
d(G_{z_{1},\lambda }(z_{0}),G_{z_{2},\lambda }(z_{0})) &=&d\bigg(\frac{1}{%
1+\lambda }z_{1}+\frac{\lambda }{1+\lambda }F(z_{0}),\frac{1}{1+\lambda }%
z_{2}+\frac{\lambda }{1+\lambda }F(z_{0})\bigg) \\
&\leq &\max \{d(z_{1},z_{2}),d(F(z_{0}),F(z_{0}))\}=d(z_{1},z_{2}).
\end{eqnarray*}%
Fix $n\in \mathbb{N}$ and suppose that $d(G_{z_{1},\lambda
}^{n}(z_{0}),G_{z_{2},\lambda }^{n}(z_{0}))\leq d(z_{1},z_{2})$. Then it
follows from (\ref{D}) that 
\begin{eqnarray*}
d(G_{z_{1},\lambda }^{n+1}(z_{0}),G_{z_{2},\lambda }^{n+1}(z_{0})) &=&d\bigg(%
\frac{1}{1+\lambda }z_{1}+\frac{\lambda }{1+\lambda }F(G_{z_{1},\lambda
}^{n}(z_{0})),\frac{1}{1+\lambda }z_{2}+\frac{\lambda }{1+\lambda }%
F(G_{z_{2},\lambda }^{n}(z_{0}))\bigg) \\
&\leq &\max \{d(z_{1},z_{2}),d(G_{z_{1},\lambda
}^{n}(z_{0}),G_{z_{2},\lambda }^{n}(z_{0}))\}=d(z_{1},z_{2}).
\end{eqnarray*}%
Now the formula (\ref{res2}) yields 
\begin{eqnarray*}
d(R_{\lambda }(z_{1}),R_{\lambda }(z_{2})) &=&\lim_{n\rightarrow \infty }d%
\bigg(G_{z_{1},\lambda }^{n}(z_{0}),G_{z_{2},\lambda }^{n}(z_{0})\bigg) \\
&\leq &d(z_{1},z_{2}),
\end{eqnarray*}%
which shows that $R_{\lambda }$ is a nonexpansive mapping.
\end{proof}

We will also use the following property called the resolvent identity.

\begin{proposition}
\label{resid} Suppose that $F:D\rightarrow D$ is a nonexpansive mapping.
Then its resolvent $R_{\lambda }$ satisfies 
\begin{equation*}
R_{\lambda }(x)=R_{\mu }\bigg(\frac{\lambda -\mu }{\lambda }R_{\lambda }(x)+%
\frac{\mu }{\lambda }x\bigg),\qquad x\in D,
\end{equation*}%
for all $\lambda >\mu >0$.
\end{proposition}

\begin{proof}
Fix $x\in D$ and $\lambda ,\mu >0$ such that $\lambda >\mu $. Define 
\begin{equation}
y:=\frac{\lambda -\mu }{\lambda }R_{\lambda }(x)+\frac{\mu }{\lambda }x.
\label{y}
\end{equation}
It follows from (\ref{res}) that there exists the unique point 
\begin{equation}\label{last2}
z:=R_{\mu }(y)=\frac{1}{1+\mu }y+\frac{\mu }{1+\mu }F(R_{\mu }(y)).
\end{equation}%
On the other hand, we have%
\begin{equation}\label{last1}
\tilde{z} :=R_{\lambda }(x)=\frac{1}{1+\lambda }x+\frac{\lambda }{1+\lambda 
}F(R_{\lambda }(x)).
\end{equation}%
From the above and (\ref{y}) we get $\lambda y-\mu \tilde{z}(1+\lambda
)=(\lambda -\mu )\tilde{z}-\lambda \mu F(\tilde{z})$, which implies 
\begin{equation*}
\tilde{z}=\frac{1}{1+\mu }y+\frac{\mu }{1+\mu }F(\tilde{z}).
\end{equation*}%
Therefore, from the uniqueness of the construction of the point $z$ and by (\ref{last2}) and (\ref{last1}), we have
\begin{equation*}
R_{\mu}(y)= z=\tilde{z} = R_{\lambda}(x).
\end{equation*}
\end{proof}

For any $x\in D$, $F:D\rightarrow D$, the set of accumulation points (in the
norm topology) of the sequence $\{F^{n}(x)\}$ is called the \textit{omega
limit set of }$x$ and is denoted by $\omega _{F}(x)$. In a similar way, if $%
R_{\lambda }:D\rightarrow D,\lambda >0,$ is a family of resolvents of $F$,
we define 
\begin{equation*}
\omega _{\{R_{\lambda } \}_{\lambda>0}} (x)=\{y\in \overline{D}:\left\Vert R_{\lambda
_{n}}(x)-y\right\Vert \rightarrow 0\text{ for some increasing sequence }%
\{\lambda _{n}\},\lambda _{n}\rightarrow \infty \},
\end{equation*}%
and the \textit{attractor }of $\{R_{\lambda }\}_{\lambda>0},$ 
\begin{equation*}
\Omega _{_{\{R_{\lambda }\}_{\lambda>0}}}=\bigcup_{x\in D}\omega _{_{\{R_{\lambda
}\}_{\lambda>0}}}(x).
\end{equation*}%

\begin{lemma}\label{boundary} 
	Suppose that $F:D\rightarrow D$ is a nonexpansive mapping without fixed points and $R_{\lambda}:D \rightarrow D$, $\lambda >0$ is a family of resolvents of $F$. Then $\Omega _{_{\{R_{\lambda }\}_{\lambda>0}}}\subset \partial D.$
\end{lemma}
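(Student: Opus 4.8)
The plan is to argue by contradiction. Fix $x\in D$ and suppose, contrary to the claim, that some accumulation point $y\in\omega_{\{R_\lambda\}_{\lambda>0}}(x)$ lies in the interior $D$ rather than on $\partial D$. By definition of the omega limit set there is an increasing sequence $\lambda_n\rightarrow\infty$ with $||R_{\lambda_n}(x)-y||\rightarrow 0$. Writing $z_n:=R_{\lambda_n}(x)$, the key is to extract from the defining relation (\ref{res}) a quantitative control on the displacement $F(z_n)-z_n$.

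First I would rearrange the resolvent equation (\ref{res}). From $z_n=\frac{1}{1+\lambda_n}x+\frac{\lambda_n}{1+\lambda_n}F(z_n)$ one obtains, after multiplying by $1+\lambda_n$ and collecting terms, the identity $F(z_n)-z_n=\frac{1}{\lambda_n}(z_n-x)$, and hence $||F(z_n)-z_n||=\frac{1}{\lambda_n}||z_n-x||$. Since $z_n\rightarrow y$ in norm the sequence $\{z_n\}$ is norm-bounded, so $\{||z_n-x||\}$ is bounded as well; because $\lambda_n\rightarrow\infty$ this forces $||F(z_n)-z_n||\rightarrow 0$.

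Next I would pass to the limit. Since $y\in D$ and the topology of $(D,d)$ coincides with the Euclidean topology, the convergence $z_n\rightarrow y$ holds also in $d$; as $F$ is nonexpansive it is continuous, so $F(z_n)\rightarrow F(y)$ in $d$ and therefore in norm. Combining this with the previous step gives $F(y)-y=\lim_n\bigl(F(z_n)-z_n\bigr)=0$, that is $F(y)=y$. This contradicts the hypothesis that $F$ is fixed point free, so no accumulation point can lie in $D$. Consequently $\omega_{\{R_\lambda\}_{\lambda>0}}(x)\subset\partial D$ for every $x\in D$, and taking the union over $x$ yields $\Omega_{\{R_\lambda\}_{\lambda>0}}\subset\partial D$.

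The delicate point — the step I expect to be the main obstacle — is the limit passage in the third paragraph, where one evaluates $F$ at the limit and moves it through the limit sign. This is legitimate precisely because $y$ is assumed to lie in the interior $D$, so that $F(y)$ is defined, $F$ is continuous there, and norm convergence to $y$ transfers to convergence in $d$; had $y$ been a boundary point, none of this continuity would be available. Thus the contradiction cleanly isolates the interior case and rules it out, leaving the boundary as the only possibility, which is exactly the assertion of the lemma. Note that, unlike the main theorem, this argument requires neither condition (\ref{D}) nor nonexpansivity of the resolvents, only the defining formula (\ref{res}) together with the absence of fixed points of $F$.
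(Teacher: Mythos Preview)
Your proof is correct and follows essentially the same contradiction argument as the paper: assume an accumulation point $y$ lies in $D$, use the resolvent equation to show $\|F(z_n)-z_n\|\to 0$, and then invoke norm-continuity of $F$ at $y$ to force $F(y)=y$. The only cosmetic difference is the algebraic rearrangement---you write $F(z_n)-z_n=\frac{1}{\lambda_n}(z_n-x)$ and bound $\|z_n-x\|$ via convergence, whereas the paper writes $z_n-F(z_n)=\frac{1}{1+\lambda_n}(x-F(z_n))$ and uses boundedness of $D$---but the structure and ideas are identical.
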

\begin{proof}
On the contrary, we suppose that there exists $y\in D$ such that $%
\left\Vert R_{\lambda _{n}}(x)-y\right\Vert \rightarrow 0$ for some $x\in D$
and an increasing sequence $\{\lambda _{n}\},\lambda _{n}\rightarrow \infty
. $ Then%
\begin{equation}
||R_{\lambda _{n}}(x)-F(R_{\lambda _{n}}(x))||=\frac{1}{1+\lambda _{n}}%
||x-F(R_{\lambda _{n}}(x))||\rightarrow 0,  \label{1}
\end{equation}%
as $n\rightarrow \infty .$ Since the topology of $(D,d)$ coincides with the
norm topology, $F:D\rightarrow D$ is norm-continuous, and hence 
\begin{equation*}
||F(y)-y||\leq ||F(y)-F(R_{\lambda _{n}}(x))||+||F(R_{\lambda
_{n}}(x))-R_{\lambda _{n}}(x)||+||R_{\lambda _{n}}(x)-y||\rightarrow 0,
\end{equation*}%
as $n\rightarrow \infty $. Thus $F(y)=y$, and we obtain a contradiction.
\end{proof}

\section{Main theorem}

We begin by recalling one of the fundamental properties of a Hilbert metric
space that allows Karlsson and Noskov to extend Beardon's Wolff--Denjoy
theorem for bounded strictly convex domains (see \cite[Theorem 5.5]{KaNo},%
\cite[Proposition 8.3.3]{LeNu}).

\begin{lemma}
\label{Ax2}Let $D\subseteq V$ be an open bounded convex set and $d$ a
Hilbert metric on $D$. If $\{x_{n}\}$ and $\{y_{n}\}$ are convergent
sequences in $D$ with limits $x$ and $y$ in $\partial D$, respectively, and
the segment $[x,y]\nsubseteq \partial D$, then for each $z\in D$ we have 
\begin{equation*}
\lim_{n\rightarrow \infty } [d(x_{n},y_{n})-\max
\{d(x_{n},z),d(y_{n},z)\}]=\infty .
\end{equation*}
\end{lemma}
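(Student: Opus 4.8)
The plan is to avoid expanding cross-ratios and instead to exploit the additivity of the Hilbert metric along Euclidean segments together with the triangle inequality. First I would record the elementary fact that for three collinear points $p,q,r\in D$ with $q$ between $p$ and $r$ one has $d(p,r)=d(p,q)+d(q,r)$; this follows at once from the definition of $d$, since $p,q,r$ determine the same pair of boundary points and the relevant cross-ratios multiply. Next I would fix the midpoints $m_n=\tfrac{1}{2}(x_n+y_n)$ and $m=\tfrac{1}{2}(x+y)$, so that $m_n\to m$. Because $[x,y]\nsubseteq\partial D$, some point of the open segment $(x,y)$ lies in the open convex set $D$; convexity then forces the entire open segment, and in particular $m$, to lie in $D$. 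Hence $m\in D$ and $m_n\in D$ for all large $n$.

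With these in hand the core estimate is short. Since $m_n$ lies on the segment $[x_n,y_n]$, additivity gives $d(x_n,y_n)=d(x_n,m_n)+d(m_n,y_n)$. Combining this with the triangle inequalities $d(x_n,z)\le d(x_n,m_n)+d(m_n,z)$ and $d(y_n,z)\le d(y_n,m_n)+d(m_n,z)$ yields
\begin{equation*}
d(x_n,y_n)-d(x_n,z)\ge d(m_n,y_n)-d(m_n,z),\qquad d(x_n,y_n)-d(y_n,z)\ge d(x_n,m_n)-d(m_n,z).
\end{equation*}
Since $d(x_n,y_n)-\max\{d(x_n,z),d(y_n,z)\}$ is exactly the minimum of the two left-hand sides, it suffices to show that both right-hand sides tend to $+\infty$.

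For the limits I would use properness. As $m_n\to m\in D$ and $z\in D$, continuity of $d$ (its topology being the Euclidean one) gives $d(m_n,z)\to d(m,z)<\infty$. On the other hand $y_n\to y\in\partial D$ while $m$ is an interior point, so $d(m,y_n)\to\infty$ by the properness property recorded in Section 2; since $d(m,m_n)\to 0$ by local equivalence of $d$ with the Euclidean norm near the interior point $m$, the bound $d(m_n,y_n)\ge d(m,y_n)-d(m,m_n)$ forces $d(m_n,y_n)\to\infty$, and symmetrically $d(x_n,m_n)\to\infty$. Both right-hand sides therefore diverge, which proves the claim.

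I expect the only genuine subtlety to be conceptual rather than technical. The tempting route is to expand all three distances via the cross-ratio and compare the rates at which the various near-boundary distances shrink, which leads to an awkward comparison of $\|x_n-a_n\|$ against $\|x_n-p_n\|$ governed by the (possibly non-smooth) local geometry of $\partial D$ at $x$. The decisive move that removes this obstacle is to insert the interior midpoint $m_n$ and invoke segment-additivity, after which everything reduces to the single, already-available fact that the distance from a fixed interior point to a boundary-converging sequence blows up.
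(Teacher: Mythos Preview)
Your argument is correct. The key steps---additivity of the Hilbert metric along straight-line segments, the fact that one interior point on $[x,y]$ forces the whole open segment $(x,y)$ into $D$ (so $m\in D$), and the blow-up $d(m,y_n)\to\infty$ for a fixed interior point $m$---are all valid, and the triangle-inequality reduction to $\min\{d(m_n,y_n),d(x_n,m_n)\}-d(m_n,z)$ is clean.

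There is nothing to compare against in the paper itself: the authors do not prove this lemma but simply quote it from Karlsson--Noskov \cite[Theorem~5.5]{KaNo} and Lemmens--Nussbaum \cite[Proposition~8.3.3]{LeNu}. Your midpoint/additivity route gives a short self-contained proof using only the properties already recorded in Section~2, and in particular avoids the direct cross-ratio estimates that the cited sources carry out; it is a genuine improvement in transparency over merely citing the result.
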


We also need the following standard argument that can be found for example in 
\cite[Lemma 5.4]{BKR3}.

\begin{lemma}
\label{limh}Let $(D,d)$ be a separable metric space and let $%
a_{n}:D\rightarrow \mathbb{R}$ be a nonexspansive mapping for each $n\in 
\mathbb{N}$. If for every $x\in D$, the sequence $\{a_{n}(x)\}$ is bounded,
then there exists a subsequence $\{a_{n_{j}}\}$ of $\{a_{n}\}$ such that $%
\lim_{j\rightarrow \infty }a_{n_{j}}(x)$ exists for every $x\in D$.
\end{lemma}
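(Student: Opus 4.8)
The plan is to run the classical Arzel\`{a}--Ascoli diagonal argument adapted to this setting: first secure convergence of a subsequence on a countable dense subset of $D$, and then upgrade to convergence on all of $D$ by invoking the equicontinuity that comes for free from nonexpansivity. The separability hypothesis is exactly what lets me start from a countable set, and the nonexpansivity (rather than mere continuity) of each $a_{n}$ is what makes the upgrade step uniform and painless.

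First I would fix a countable dense subset $\{x_{k}:k\in \mathbb{N}\}$ of $D$. For each fixed $k$ the real sequence $\{a_{n}(x_{k})\}_{n}$ is bounded by hypothesis, so by Bolzano--Weierstrass I can extract a convergent subsequence. I would do this successively and nest the subsequences: choose $\{a_{n}^{(1)}\}$ a subsequence of $\{a_{n}\}$ for which $\{a_{n}^{(1)}(x_{1})\}$ converges, then $\{a_{n}^{(2)}\}$ a subsequence of $\{a_{n}^{(1)}\}$ for which $\{a_{n}^{(2)}(x_{2})\}$ converges, and so on. The diagonal sequence $a_{n_{j}}:=a_{j}^{(j)}$ is then, from index $j=k$ onward, a subsequence of $\{a_{n}^{(k)}\}$, so $\lim_{j}a_{n_{j}}(x_{k})$ exists for every $k$.

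The main (and only mildly subtle) step is to extend this convergence from the dense set to an arbitrary point $x\in D$. Here I would exploit that each $a_{n}$ is nonexpansive, so that for any indices $i,j$ and any $k$,
\begin{align*}
|a_{n_{j}}(x)-a_{n_{i}}(x)|
&\leq |a_{n_{j}}(x)-a_{n_{j}}(x_{k})|+|a_{n_{j}}(x_{k})-a_{n_{i}}(x_{k})|+|a_{n_{i}}(x_{k})-a_{n_{i}}(x)| \\
&\leq 2d(x,x_{k})+|a_{n_{j}}(x_{k})-a_{n_{i}}(x_{k})|.
\end{align*}
Given $\varepsilon >0$, density lets me pick $x_{k}$ with $d(x,x_{k})<\varepsilon /3$, and then the already-established convergence (hence Cauchy property) of $\{a_{n_{j}}(x_{k})\}_{j}$ makes the middle term smaller than $\varepsilon /3$ for all large $i,j$. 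Thus $\{a_{n_{j}}(x)\}_{j}$ is Cauchy in $\mathbb{R}$ and therefore converges. Since $x\in D$ was arbitrary, $\lim_{j}a_{n_{j}}(x)$ exists for every $x\in D$, completing the argument. I do not expect any genuine obstacle; the one thing to keep straight is that the bound $2d(x,x_{k})$ is uniform in the subsequence indices $i,j$, which is precisely what nonexpansivity guarantees.
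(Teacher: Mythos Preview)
Your argument is correct: the diagonal extraction on a countable dense set followed by the equicontinuity upgrade via nonexpansivity is exactly the standard proof. The paper itself does not prove this lemma but merely cites it as a ``standard argument'' found in \cite[Lemma~5.4]{BKR3}, and what you have written is precisely that standard argument.
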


Fix $x_{0}\in D$ and consider a sequence $\{x_n \in D : n \in \mathbb{N} \}$ contained in $D$. Define $a_{n}(x)=d(x,x_{n})-d(x_{n},x_{0})$ for any $%
n\in \mathbb{N}$. Note that 
\begin{equation*}
|a_{n}(y)-a_{n}(x)|\leq d(x,y), 
\end{equation*}
i.e., $a_{n}$
is nonexpansive and the sequence $\{a_{n}(y)\}$ is bounded (by $d(y,x_0)$) for every $y\in D$%
. It follows from Lemma \ref{limh} that there exists a subsequence $%
\{x_{n_{j}}\}$ of $\{x_{n}\}$ such that $\lim_{j\rightarrow \infty
}a_{n_{j}}(x)$ exists for any $x\in D$, i.e., 
\begin{equation}
\lim_{j\rightarrow \infty }d(x,x_{n_{j}})-d(x_{n_{j}},x_{0})  \label{el}
\end{equation}%
exists for every $x\in D$.

Now we are in a position to prove a variant of the Karlsson--Nussbaum
conjecture for resolvents of nonexpansive mappings.

\begin{theorem}
\label{main}Let $D\subset V$ be a bounded convex domain. Suppose that $(D,d)$
is a Hilbert metric space satisfying condition (\ref{D}) and $R_{\lambda
}:D\rightarrow D,\lambda >0,$ is a family of resolvents of a nonexpansive
mapping $F:D\rightarrow D$ without fixed points. Then $\co\Omega
_{\{R_{\lambda }\}_{\lambda>0}}\subseteq \partial D$.
\end{theorem}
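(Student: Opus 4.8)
The plan is to argue by contradiction. Since $\partial D$ is closed in the compact set $\overline D$ and $\co\Omega_{\{R_\lambda\}}$ consists of all finite convex combinations of points of $\Omega_{\{R_\lambda\}}$, it suffices to show that every such combination lies in $\partial D$. So suppose there are $\xi_1,\dots,\xi_k\in\Omega_{\{R_\lambda\}}$ and weights $t_i>0$ with $\sum_i t_i=1$ such that $w=\sum_i t_i\xi_i\in D$. By Lemma~\ref{boundary} each $\xi_i\in\partial D$, and by convexity $w\in\overline D$; the whole issue is to exclude $w\in D$. The conceptual reason this should be impossible is that condition (\ref{D}) (hence (\ref{C})) upgrades the two--point dichotomy of Lemma~\ref{Ax2} to a statement about convex hulls: a convex combination cannot drift into the interior faster than its constituents.

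I would isolate the engine as the following claim. If $\{y^i_n\}_n$, $i=1,\dots,k$, are sequences in $D$ with $y^i_n\to\eta_i\in\partial D$ and with uniformly bounded mutual Hilbert distances, $\sup_n\max_{i,j}d(y^i_n,y^j_n)<\infty$, then $\co\{\eta_1,\dots,\eta_k\}\subseteq\partial D$. The proof is by induction on $k$. For $k=2$ this is exactly the contrapositive of Lemma~\ref{Ax2}: were $[\eta_1,\eta_2]\nsubseteq\partial D$, we would get $d(y^1_n,y^2_n)\to\infty$, contradicting boundedness; hence $[\eta_1,\eta_2]\subseteq\partial D$. For the inductive step, fix a point of the hull and isolate one index, writing it as $t_1\eta_1+(1-t_1)v$ with $v=\sum_{i\ge2}\frac{t_i}{1-t_1}\eta_i$. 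Setting $v_n=\sum_{i\ge2}\frac{t_i}{1-t_1}y^i_n$, condition (\ref{C}) gives $d(y^1_n,v_n)\le\max_{i\ge2}d(y^1_n,y^i_n)$, so $d(y^1_n,v_n)$ stays bounded, while the inductive hypothesis (applied to $y^2_n,\dots,y^k_n$) places $v=\lim_n v_n$ in $\partial D$. The case $k=2$ applied to the pair $y^1_n,v_n$ then gives $[\eta_1,v]\subseteq\partial D$, so the point lies in $\partial D$.

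To feed points into this claim I would use nonexpansiveness of the resolvents (Lemma~\ref{nonex}): along any single sequence $\lambda_n\to\infty$, the points $y^i_n:=R_{\lambda_n}(x_i)$ satisfy $d(y^i_n,y^j_n)\le d(x_i,x_j)$, so their mutual distances are bounded uniformly in $n$. Passing to a subsequence by compactness of $\overline D$ (diagonally over the finitely many indices $i$), I may assume $R_{\lambda_n}(x_i)\to\eta_i\in\partial D$ for every $i$, and the claim yields $\co\{\eta_1,\dots,\eta_k\}\subseteq\partial D$. In other words, the convex hull of any family of attractor points that can be realised along one common sequence $\lambda_n\to\infty$ is contained in $\partial D$.

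The main obstacle is the passage from this common--sequence statement to the full attractor $\Omega_{\{R_\lambda\}}=\bigcup_{x}\omega_{\{R_\lambda\}}(x)$: the prescribed $\xi_i$ are subsequential limits of $R_\lambda(x_i)$ along possibly \emph{different} sequences $\lambda^{(i)}_m\to\infty$, so the $\eta_i$ produced above need not coincide with the given $\xi_i$, and distinct parameters carry no a~priori distance bound. To bridge this I would bring in the horofunctions prepared in (\ref{el}): by Lemma~\ref{limh} one extracts from a resolvent sequence a horofunction $h(y)=\lim_n[d(y,R_{\lambda_n}(x_0))-d(x_0,R_{\lambda_n}(x_0))]$, and condition (\ref{C}) forces $h(\sum_i t_i a_i)\le\max_i h(a_i)$, so every sublevel set of $h$ is convex. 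The plan is to use nonexpansiveness at a common parameter together with Lemma~\ref{Ax2} to segment--connect, within $\partial D$, each attractor point to a point realised along one fixed exhausting sequence, thereby trapping all of $\Omega_{\{R_\lambda\}}$ inside a single convex boundary set determined by $h$. Verifying that one horofunction (one sequence $\lambda_n\to\infty$) absorbs the attractors of all base points $x$ is the delicate step, and it is here that condition (\ref{D}) and the resolvent identity of Proposition~\ref{resid} must do the real work.
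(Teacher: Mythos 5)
There is a genuine gap here, and to your credit you have named it yourself: everything you actually prove concerns attractor points realised along one \emph{common} sequence $\lambda_n\to\infty$, whereas the theorem is about $\Omega_{\{R_{\lambda}\}_{\lambda>0}}$, whose points $\xi_i\in\omega_{\{R_\lambda\}}(x_i)$ come with unrelated sequences $\lambda^{(i)}_m$. Your inductive hull claim (via Lemma \ref{Ax2} and condition (\ref{C})) and the bound $d(R_{\lambda_n}(x_i),R_{\lambda_n}(x_j))\le d(x_i,x_j)$ from Lemma \ref{nonex} are correct, but they establish a strictly weaker statement. The final paragraph, where the different sequences must be reconciled, is only a heuristic plan, and the proposed mechanism is doubtful: nonexpansiveness compares resolvents only at equal parameters, and ``segment-connecting within $\partial D$'' via Lemma \ref{Ax2} gives no way to relate boundary points attained along incomparable parameter sequences, since that lemma is silent when both limit points already lie on a common boundary face.

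The step you leave open is exactly where the paper's proof does its work, and the missing idea is a \emph{monotonicity} property of the horofunction-type limit, not a connectivity argument. The paper takes a minimal violating configuration $\sum_i\alpha_i\zeta^i\in D$ (minimality of $m$ replaces your induction: it forces $\zeta=\zeta^1$ and $\eta=\sum_{i\ge2}\mu_i\zeta^i$ into $\partial D$ with $[\zeta,\eta]\nsubseteq\partial D$, so only the two-point Lemma \ref{Ax2} is ever needed), builds $g(x)=\lim_j\,[\,d(x,R_{\lambda_j^1}(z_1))-d(R_{\lambda_j^1}(z_1),z_1)\,]$ along the \emph{first} sequence via Lemma \ref{limh}, and then proves $g(R_\mu(y))\le g(y)$ for all $y\in D$, $\mu>0$: by Proposition \ref{resid}, $R_{\lambda_j^1}(z_1)=R_\mu\bigl(\frac{\lambda_j^1-\mu}{\lambda_j^1}R_{\lambda_j^1}(z_1)+\frac{\mu}{\lambda_j^1}z_1\bigr)$ for $\lambda_j^1>\mu$, nonexpansiveness of $R_\mu$ peels off the outer resolvent, replacing $y$ by $\frac{\lambda_j^1-\mu}{\lambda_j^1}y+\frac{\mu}{\lambda_j^1}z_1$ costs $o(1)$, and condition (\ref{D}) --- applied with the common endpoint $z_1$ --- removes the convex reparametrisation. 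This inequality is the bridge over your obstacle: combined with max-convexity of $g$ from (\ref{C}), it yields $g(\eta^k)\le\max_{i\ge2}g(R_{\lambda_k^i}(z_i))\le\max_{i\ge2}g(z_i)\le\max_{i\ge 2}d(z_i,z_1)=M$ uniformly in $k$, even though the parameters $\lambda_k^i$ inside $\eta^k=\sum_{i\ge2}\mu_iR_{\lambda_k^i}(z_i)$ have nothing to do with $\lambda_j^1$. A diagonal choice of $j_k$ then gives $d(\eta^k,R_{\lambda_{j_k}^1}(z_1))-d(R_{\lambda_{j_k}^1}(z_1),z_1)\le M+1$, contradicting Lemma \ref{Ax2} since $\eta^k\to\eta$, $R_{\lambda_{j_k}^1}(z_1)\to\zeta$ and $[\zeta,\eta]\nsubseteq\partial D$. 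So to complete your argument you would need precisely this resolvent-monotonicity $g\circ R_\mu\le g$; your sublevel-set observation is the right start, but without Proposition \ref{resid} and condition (\ref{D}) entering concretely at this point, the different-sequence case --- which is the actual content of the theorem --- remains unproven.
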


\begin{proof}
Suppose on the contrary that there exist $z_{1},\ldots ,z_{m}\in D,\zeta
^{1}\in \omega _{\{R_{\lambda }\}_{\lambda>0}}(z_{1}),\ldots ,\zeta ^{m}\in \omega
_{\{R_{\lambda }\}_{\lambda>0}}(z_{m})$ and $0<\alpha _{1},\ldots ,\alpha _{m}<1$ with $%
\sum_{i=1}^{m}\alpha _{i}=1$ such that $\sum_{i=1}^{m}\alpha _{i}\zeta
^{i}\in D$. Since $F$ does not have fixed points, it follows from Lemma \ref{boundary} that the omega limit sets $%
\omega _{\{R_{\lambda }\}_{\lambda>0}}(z_{i})\subseteq \partial D$, $i=1,\ldots ,m,$
and, following \cite[Theorem 8.3.11]{LeNu}, we can assume that $m\geq 2$ is
minimal with the property that $\sum_{i=1}^{m}\alpha _{i}\zeta ^{i}\in D$.
It follows that $R_{\lambda _{j}^{i}}(z_{i})\rightarrow \zeta ^{i}\in
\partial D$ for some increasing sequences $\{\lambda _{j}^{i}\}_{j},\lambda
_{j}^{i}\rightarrow \infty $, as $j\rightarrow \infty ,i=1,\ldots ,m.$ We
put $\zeta =\zeta ^{1}$ and $\eta =\sum_{i=2}^{m}\mu _{i}\zeta ^{i}$, where $%
\mu _{i}=\frac{\alpha _{i}}{1-\alpha _{1}}$ for $i\in \lbrack 2,m]$. Let $%
\eta ^{j}=\sum_{i=2}^{m}\mu _{i}R_{\lambda _{j}^{i}}(z_{i})$ for all $j\geq
1 $. Since $m$ is minimal, we have $\zeta ,\eta \in \partial D$ and $\alpha
_{1}\zeta +(1-\alpha _{1})\eta \in D$. Since $D$ is convex, we get $\alpha
\zeta +(1-\alpha )\eta \in D$ for all $\alpha \in (0,1)$. By passing to a
subsequence we can assume from (\ref{el}) that for every $x\in D$ there
exists the limit 
\begin{equation}
g(x)=\lim_{j\rightarrow \infty }d(x,R_{\lambda
_{j}^{1}}(z_{1}))-d(R_{\lambda _{j}^{1}}(z_{1}),z_{1}).
\end{equation}%
Since 
\begin{equation*}
\bigg|\bigg|y-\frac{\lambda _{j}^{1}-\mu }{\lambda _{j}^{1}}y-\frac{\mu }{%
\lambda _{j}^{1}}z_{1}\bigg|\bigg|=\frac{\mu }{\lambda _{j}^{1}}%
||y-z_{1}||\rightarrow 0,\ \ \ \mbox{as}\;\lambda _{j}^{1}\rightarrow \infty
,
\end{equation*}%
and topologies of $(D,d)$ and $(\overline{D},||\cdot ||)$ coincide on $D$,
we have 
\begin{equation}
d\bigg(y,\frac{\lambda _{j}^{1}-\mu }{\lambda _{j}^{1}}y+\frac{\mu }{\lambda
_{j}^{1}}z_{1}\bigg)\rightarrow 0,  \label{resnorm}
\end{equation}%
if $\lambda _{j}^{1}\rightarrow \infty .$ According to Lemma \ref{nonex},
Proposition \ref{resid}, (\ref{resnorm}) and condition (\ref{C}), we get 
\begin{eqnarray*}
g(R_{\mu }(y)) &=&\lim_{j\rightarrow \infty }d(R_{\mu }(y),R_{\lambda
_{j}^{1}}(z_{1}))-d(R_{\lambda _{j}^{1}}(z_{1}),z_{1}) \\
&=&\lim_{j\rightarrow \infty }d\bigg(R_{\mu }(y),R_{\mu }\bigg(\frac{\lambda
_{j}^{1}-\mu }{\lambda _{j}^{1}}R_{\lambda _{j}^{1}}(z_{1})+\frac{\mu }{%
\lambda _{j}^{1}}z_{1}\bigg)\bigg)-d(R_{\lambda _{j}^{1}}(z_{1}),z_{1}) \\
&\leq &\limsup_{j\rightarrow \infty }d\bigg(y,\frac{\lambda _{j}^{1}-\mu }{%
\lambda _{j}^{1}}R_{\lambda _{j}^{1}}(z_{1})+\frac{\mu }{\lambda _{j}^{1}}%
z_{1}\bigg)-d(R_{\lambda _{j}^{1}}(z_{1}),z_{1}) \\
&=&\lim_{j\rightarrow \infty }d\bigg(\frac{\lambda _{j}^{1}-\mu }{\lambda
_{j}^{1}}y+\frac{\mu }{\lambda _{j}^{1}}z_{1},\frac{\lambda _{j}^{1}-\mu }{%
\lambda _{j}^{1}}R_{\lambda _{j}^{1}}(z_{1})+\frac{\mu }{\lambda _{j}^{1}}%
z_{1}\bigg)-d(R_{\lambda _{j}^{1}}(z_{1}),z_{1}) \\
&\leq &\lim_{j\rightarrow \infty }d(y,R_{\lambda
_{j}^{1}}(z_{1}))-d(R_{\lambda _{j}^{1}}(z_{1}),z_{1}) \\
&=&g(y).
\end{eqnarray*}%
From the above we have $g(R_{\mu }(y))\leq g(y)\leq d(y,z_{1})$ for every $%
y\in D$ and $\mu >0$. It follows from (\ref{C}) that for any $k\in \mathbb{N}
$, 
\begin{equation*}
g(\eta ^{k})=g\bigg(\sum_{i=2}^{m}\,\mu _{i}R_{\lambda _{k}^{i}}(z_{i})\bigg)%
\leq \max_{i=2,\ldots ,m}g(z_{i})\leq \max_{i=2,\ldots ,m}d(z_{i},z_{1})=M.
\end{equation*}%
Consequently, by diagonal method, there exists a subsequence $\lambda
_{j_{1}}^{1}\leq \lambda _{j_{2}}^{1}\leq \ldots \leq \lambda
_{j_{k}}^{1}\leq \ldots $ of $\{\lambda _{j}^{1}\}$ such that 
\begin{equation}
\limsup_{k\rightarrow \infty }d(\eta ^{k},R_{\lambda
_{j_{k}}^{1}}(z_{1}))-d(R_{\lambda _{j_{k}}^{1}}(z_{1}),z_{1})\leq M+1.
\label{cont}
\end{equation}%
Since $R_{\lambda _{j}^{i}}(z_{i})\rightarrow \zeta ^{i}$, as $j\rightarrow
\infty $ for any $i=1,\ldots ,m$, we have 
\begin{equation*}
||\eta ^{j}-\eta ||=\bigg|\bigg|\sum_{i=2}^{m}\mu _{i}R_{\lambda
_{j}^{i}}(z_{i})-\sum_{i=2}^{m}\mu _{i}\zeta ^{i}\bigg|\bigg|\leq
\sum_{i=2}^{m}\mu _{i}||R_{\lambda _{j}^{i}}(z_{i})-\zeta ^{i}||\rightarrow
0,\quad j\rightarrow \infty ,
\end{equation*}%
which implies that $||\eta ^{j}-\eta ||\rightarrow 0,j\rightarrow \infty $.
Moreover, since $[\zeta ,\eta ]\nsubseteq \partial D$ it follows from Lemma %
\ref{Ax2} that 
\begin{equation*}
\liminf_{k\rightarrow \infty }d(\eta ^{k},R_{\lambda
_{j_{k}}^{1}}(z_{1}))-d(R_{\lambda _{j_{k}}^{1}}(z_{1}),z_{1})=\infty .
\end{equation*}%
However, the above formula contradicts (\ref{cont}).
\end{proof}

We can use Theorem \ref{main} to show a Wolff-Denjoy type theorem for
resolvents of nonexpansive mappings. Let $(D,d)$ be a geodesic metric space
and $[x,y],[x^{\prime },y^{\prime }]$ two arbitrary geodesic segments in $D$.
For every $\alpha \in \lbrack 0,1]$, consider the point $z = \alpha x + (1- \alpha)y$ on segment $[x,y]$ such that $d(\alpha x + (1- \alpha)y,y ) = \alpha d(x,y)$ and in the same way, the point $z^{\prime } = \alpha x^{\prime} + (1 - \alpha) y^{\prime}$ on segment $%
[x^{\prime },y^{\prime }]$ such that $d(\alpha x^{\prime} + (1- \alpha)y^{\prime},y^{\prime} ) = \alpha d(x^{\prime},y^{\prime})$. Recall that a geodesic space $%
(D,d)$ is called \textit{Busemann} \textit{convex} if 
\begin{equation*}
d(z,z^{\prime })\leq (1-\alpha )d(x,x^{\prime })+\alpha d(y,y^{\prime })
\end{equation*}%
for every $x,y,x^{\prime },y^{\prime }\in D$ and $\alpha \in \lbrack 0,1]$.

Combining Corollary 3.3 and Proposition 3.4 in \cite{AlKo}, we obtain the
following proposition (see also \cite{KeSt}, \cite[p. 191]{Pa}).

\begin{proposition}
\label{ellips}Let $D\subset V$ be a bounded convex domain. A Hilbert metric
space $(D,d)$ is Busemann convex if and only if $D$ is an ellipsoid.
\end{proposition}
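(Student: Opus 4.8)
The plan is to prove the two implications separately, exploiting throughout the affine invariance of the Hilbert metric. An invertible affine map of $V$ carries a convex domain onto an affinely equivalent one and preserves cross-ratios, so it is an isometry between the associated Hilbert metric spaces. Consequently both Busemann convexity and the property of being an ellipsoid are affine invariants, and I am free to normalize $D$ by any convenient affine change of coordinates. The overall scheme is then to read the easy direction off a hyperbolic-geometry computation and to obtain the hard direction from the rigidity statement of \cite{AlKo}, so that combining Corollary 3.3 and Proposition 3.4 there delivers the full equivalence.

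For the direction in which $D$ is assumed to be an ellipsoid, I would apply an affine map sending $D$ to the open Euclidean unit ball $B^{n}$. The Hilbert metric on $B^{n}$ is exactly the Beltrami--Klein model of real hyperbolic space $\mathbb{H}^{n}$, whose geodesics are the chords of $B^{n}$, i.e.\ precisely the straight segments that serve as Hilbert geodesics. Since $\mathbb{H}^{n}$ is CAT$(-1)$, and every CAT$(0)$ space is uniquely geodesic and satisfies the convexity inequality $d(z,z')\le(1-\alpha)d(x,x')+\alpha d(y,y')$ for affinely synchronized points on two geodesics, it follows that $(B^{n},d)$, and hence $(D,d)$, is Busemann convex. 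This is the content I would extract from the cited results of \cite{AlKo}.

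For the converse I would assume $(D,d)$ is Busemann convex and argue that $\partial D$ must be a quadric. The structural input is that Busemann convexity forces \emph{uniqueness} of geodesics together with the above comparison inequality along every pair of segments. Because Hilbert geodesics are straight segments, this inequality rigidly ties the behaviour of the cross-ratio metric, for all collinear configurations, to the shape of $\partial D$; tracking it along families of segments as their endpoints approach the boundary pins down the local convexity and smoothness of $\partial D$ and forces it to be the boundary of an ellipsoid. This rigidity step is exactly where I would invoke the relevant statement of \cite{AlKo}, which converts the purely metric Busemann inequality into the algebraic ellipsoid condition.

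I expect the converse, rigidity direction to be the main obstacle: the ``if'' part is a soft consequence of the CAT$(0)$ geometry of the Klein model, whereas deducing the global conclusion that $\partial D$ is a quadric from a metric convexity inequality requires the delicate analysis carried out in \cite{AlKo}. Combining the two cited results then yields the stated equivalence, and Proposition \ref{ellips} identifies the ellipsoids as precisely the domains on which Theorem \ref{main} applies through Busemann convexity.
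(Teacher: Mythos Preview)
Your proposal is correct and aligns with the paper's own treatment: the paper does not give an independent proof of this proposition but simply records it as a consequence of Corollary~3.3 and Proposition~3.4 in \cite{AlKo} (with \cite{KeSt} and \cite{Pa} as further references), and you do essentially the same, deferring the rigidity direction to \cite{AlKo} while supplying the standard Klein-model/CAT$(-1)$ explanation for the easy direction. The only minor addition on your side is the explicit affine-invariance reduction and the identification of $(B^{n},d_H)$ with $\mathbb{H}^{n}$, which is correct and is implicit in the cited sources.
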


Since in Hilbert's metric spaces every straight-line segment is a geodesic,
it follows from Proposition \ref{ellips} that $(D,d)$ satisfies condition (%
\ref{D}), whenever $D$ is an ellipsoid. This leads to the following
Wolff-Denjoy type theorem for resolvents of nonexpansive mappings.

\begin{corollary}
Suppose that $D\subset V$ is an ellipsoid and $R_{\lambda }:D\rightarrow
D,\lambda >0,$ is the resolvent of a nonexpansive mapping $F:D\rightarrow D$
(with respect to Hilbert's metric) without fixed points. Then there exists $%
\xi \in \partial D$ such that $\{R_{\lambda } \}_{\lambda>0}$ converge uniformly on bounded
sets of $D$ to $\xi $.
\end{corollary}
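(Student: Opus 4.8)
The plan is to deduce the corollary from Theorem \ref{main} using the strict convexity of the ellipsoid; throughout abbreviate $\Omega:=\Omega_{\{R_\lambda\}_{\lambda>0}}$ and $\omega(x):=\omega_{\{R_\lambda\}_{\lambda>0}}(x)$. Since $D$ is an ellipsoid, Proposition \ref{ellips} shows that $(D,d)$ is Busemann convex, whence (as noted before the statement) condition (\ref{D}) holds; thus each $R_\lambda$ is nonexpansive by Lemma \ref{nonex} and Theorem \ref{main} gives $\co\Omega\subseteq\partial D$. First I would show that $\Omega$ is a single point. As $\overline D$ is compact, $\omega(x)\neq\emptyset$ for every $x\in D$, so $\Omega\neq\emptyset$; and if $\Omega$ contained distinct points $\xi_1\neq\xi_2$, then the segment $[\xi_1,\xi_2]\subseteq\co\Omega\subseteq\partial D$ would contradict the strict convexity of the boundary of an ellipsoid, which contains no nondegenerate segment. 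Hence $\Omega=\{\xi\}$ for some $\xi\in\partial D$.

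Next I would obtain pointwise convergence. For fixed $x\in D$ the set $\omega(x)$ is a nonempty subset of $\Omega=\{\xi\}$, hence equals $\{\xi\}$, so $\xi$ is the unique accumulation point of $\{R_\lambda(x)\}$ as $\lambda\to\infty$. Since $\overline D\subset V$ is compact (so the relevant topology is metrizable), this forces $R_\lambda(x)\to\xi$ in norm: otherwise some sequence $\lambda_n\to\infty$ would keep $R_{\lambda_n}(x)$ at norm-distance $\geq\varepsilon$ from $\xi$, and a convergent subsequence would produce a point of $\omega(x)$ different from $\xi$, a contradiction.

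The main difficulty, and the step I expect to require the most care, is upgrading pointwise to uniform convergence on $d$-bounded sets, since $\xi\in\partial D$ is a point where $d$ degenerates. The key claim is: if $\{p_n\},\{q_n\}\subset D$ satisfy $d(p_n,q_n)\leq r$ for a fixed $r>0$ and $p_n\to\xi\in\partial D$ in norm, then $q_n\to\xi$ in norm. I would prove it by taking an arbitrary subsequential norm-limit $\eta\in\overline D$ of $\{q_n\}$ and ruling out the two bad cases. If $\eta\in D$, then $d(q_{n_k},\eta)\to 0$, so $d(p_{n_k},\eta)\leq d(p_{n_k},q_{n_k})+d(q_{n_k},\eta)$ stays bounded, contradicting $d(p_{n_k},\eta)\to\infty$, which holds because $p_{n_k}\to\xi\in\partial D$ and $D$ is proper. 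Thus $\eta\in\partial D$. If moreover $\eta\neq\xi$, then strict convexity gives $[\xi,\eta]\nsubseteq\partial D$, so Lemma \ref{Ax2} applied to $p_{n_k}\to\xi$ and $q_{n_k}\to\eta$ yields $d(p_{n_k},q_{n_k})\to\infty$, again contradicting $d(p_n,q_n)\leq r$. Hence every subsequential limit of $\{q_n\}$ equals $\xi$, and by compactness $q_n\to\xi$.

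Finally I would assemble the pieces. Let $B\subseteq D$ be bounded in $(D,d)$, fix $x_0\in D$, and set $r=\sup_{x\in B}d(x,x_0)<\infty$. Were the convergence not uniform on $B$, there would exist $\varepsilon>0$, a sequence $\lambda_n\to\infty$ and points $x_n\in B$ with $\|R_{\lambda_n}(x_n)-\xi\|\geq\varepsilon$. Put $p_n=R_{\lambda_n}(x_0)$ and $q_n=R_{\lambda_n}(x_n)$; by nonexpansivity of $R_{\lambda_n}$ (Lemma \ref{nonex}) we get $d(p_n,q_n)\leq d(x_0,x_n)\leq r$, while $p_n\to\xi$ by the pointwise convergence at $x_0$. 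The claim then gives $q_n\to\xi$, contradicting $\|q_n-\xi\|\geq\varepsilon$. Therefore $\sup_{x\in B}\|R_\lambda(x)-\xi\|\to 0$ as $\lambda\to\infty$, i.e. $\{R_\lambda\}_{\lambda>0}$ converges to $\xi$ uniformly on bounded sets of $D$.
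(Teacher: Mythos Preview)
Your proof is correct and follows essentially the same route as the paper: use Theorem~\ref{main} together with the strict convexity of the ellipsoid to reduce $\Omega_{\{R_\lambda\}}$ to a single boundary point $\xi$, then combine nonexpansivity of $R_\lambda$ with Lemma~\ref{Ax2} to upgrade to uniform convergence on bounded sets. Your argument simply makes explicit several steps (nonemptiness of $\Omega$, pointwise convergence, the auxiliary claim about sequences at bounded $d$-distance) that the paper compresses into a brief reference to a ``standard'' argument.
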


\begin{proof}
It follows from Theorem \ref{main} that $\co\Omega _{\{R_{\lambda
}\}_{\lambda>0}}\subseteq \partial D.$ Since $D$ is strictly convex, $\Omega
_{\{R_{\lambda }\}_{\lambda>0}}$ consists of a single element $\xi \in \partial D.$ The
proof of uniform convergence on bounded sets is standard (see, e.g., \cite%
{Be2}): suppose, on the contrary, that there exist an open neighbourhood $%
U\subset \overline{D}$ of $\xi $, a bounded set $K\subset D$ and a sequence $%
\{y_{\lambda _{n}}\}\subset K$ $(\lambda _{n}\rightarrow \infty )$ such that 
$R_{\lambda _{n}}(y_{\lambda _{n}})\notin U$ for each $n$. Then 
\begin{equation*}
d(R_{\lambda _{n}}(y_{\lambda _{n}}),R_{\lambda _{n}}(y))\leq d(y_{\lambda
_{n}},y)\leq \diam K
\end{equation*}%
for any $y\in K$ and, since $R_{\lambda _{n}}(y)\rightarrow \xi $, we deduce
from Lemma \ref{Ax2} that $R_{\lambda _{n}}(y_{\lambda _{n}})\rightarrow \xi
\in \overline{D}\setminus U$, a contradiction.
\end{proof}

\bigskip 

\textbf{Acknowledgements} The first author  was partially supported by National Science Center (Poland) Preludium Grant No. UMO-2021/41/N/ST1/02968.

\bigskip

\end{document}